\documentclass[11pt,times,twoside]{article}
\usepackage{graphicx,latexsym,euscript,makeidx,color,bm}
\usepackage{amsmath,amsfonts,amssymb,amsthm,thmtools,mathtools,mathrsfs,enumerate}
\usepackage[colorlinks,linkcolor=blue,anchorcolor=green,citecolor=red]{hyperref}


\usepackage{geometry}
\geometry{left=2.5cm,right=2.5cm,top=3cm,bottom=3cm}
\def\5n{\negthinspace \negthinspace \negthinspace \negthinspace \negthinspace }
\def\4n{\negthinspace \negthinspace \negthinspace \negthinspace }
\def\3n{\negthinspace \negthinspace \negthinspace }
\def\2n{\negthinspace \negthinspace }
\def\1n{\negthinspace }



\def\ss{\smallskip}                
\def\ms{\medskip}                
               
\def\ds{\displaystyle}

\def\no{\noindent}        \def\q{\quad}                      
\def\ns{\noalign{\ss}}    \def\qq{\qquad}

         \def\rf{\eqref}                    
  \def\deq{\triangleq}               
            \def\({\Big (}
                  \def\){\Big )}
\def\leq{\leqslant}       \def\geq{\geqslant}
          \def\[{\Big[}
           \def\]{\Big]}
                   \def\cd{\cdot}


            \def\d{\delta}

\def\bde{\begin{definition}\label}    \def\ede{\end{definition}}
\def\be{\begin{equation}}
\def\bel{\begin{equation}\label}      \def\ee{\end{equation}}
\def\bt{\begin{theorem}\label}        \def\et{\end{theorem}}
\def\bc{\begin{corollary}\label}      \def\ec{\end{corollary}}
\def\bl{\begin{lemma}\label}          \def\el{\end{lemma}}
\def\bp{\begin{proposition}\label}    \def\ep{\end{proposition}}
\def\bas{\begin{assumption}\label}    \def\eas{\end{assumption}}
\def\br{\begin{remark}\label}         \def\er{\end{remark}}
\def\bex{\begin{example}\label}       \def\ex{\end{example}}
\def\ba{\begin{array}}                \def\ea{\end{array}}
\def\ben{\begin{enumerate}}           \def\een{\end{enumerate}}

\newtheorem{theorem}{Theorem}[section]
\newtheorem{definition}[theorem]{Definition}
\newtheorem{proposition}[theorem]{Proposition}
\newtheorem{corollary}[theorem]{Corollary}
\newtheorem{lemma}[theorem]{Lemma}
\newtheorem{remark}[theorem]{Remark}
\newtheorem{example}[theorem]{Example}

\newtheorem{assumption}{Assumption}

\makeatletter
   
   \@addtoreset{equation}{section}
\makeatother

\sloppy
\allowdisplaybreaks[4]

\begin{document}

\title{\Large \bf Fractional backward stochastic differential equations with delayed generator}

\author{
%
Jiaqiang Wen\thanks{Department of Mathematics, Southern University of Science and Technology, Shenzhen 518055, China (Email: {\tt wenjq@sustech.edu.cn}). JW is supported by National Natural Science Foundation of China (grant No. 12101291) and Guangdong Basic and Applied Basic Research Foundation (grant No. 2022A1515012017).}}
\date{}
\maketitle

\renewcommand{\thefootnote}{\fnsymbol{footnote}}

\maketitle

\no\bf Abstract: \rm
In this paper, we focus on the solvability of a class of fractional backward stochastic differential equations (BSDEs, for short) with delayed generator. In this class of equations, the generator includes not only the values of the solutions of the present but also the past. Under Lipschitz condition, the existence and uniqueness of such BSDEs are established. A comparison theorem for this class of BSDEs is also obtained.

\ms

\no\bf Key words: \rm  fractional backward stochastic differential equation; backward stochastic differential equation; fractional Brownian motion; time delayed.

\ms

\no\bf AMS subject classifications. \rm 60H10; 60G22.

\section{Introduction}

A centered Gaussian process $B^{H}=\{ B^{H}_{t},t\geq 0 \}$ is called a
fractional Brownian motion (fBm, for short) with Hurst parameter $H \in (0, 1)$ if its covariance is
\begin{equation*}
 E(B^{H}_{t}B^{H}_{s}) = \frac{1}{2} (t^{2H} + s^{2H} - |t-s|^{2H}), \q~ t,s \geq 0.
\end{equation*}
When $H=\frac{1}{2}$, this process degenerates into a classical Brownian motion.
For $H>\frac{1}{2}$, $B^{H}_{t}$ exhibits the property of long-range dependence,
which makes fBm an important driving noise in many fields such as finance, telecommunication networks, and physics.

\ms

In 1990, Pardoux and Peng \cite{Peng} introduced the nonlinear backward stochastic differential equations (BSDEs, for short). In the next two decades, it had been widely used in different fields of mathematical finance \cite{Peng2}, stochastic control \cite{Yong5}, and partial differential equations \cite{Peng92}. At the same time, for better applications, BSDE itself has been developed into many different branches.
For example, recently, BSDEs driven by fractional Brownian motion, also known as fractional BSDEs,
were studied by Hu and Peng \cite{Hu}, where they proved the existence and uniqueness of solutions when the Hurst parameter $H>\frac{1}{2}$. Then Maticiuc and Nie \cite{Maticiuc} obtained some general results of fractional BSDEs through a rigorous approach. Buckdahn and Jing \cite{Buckdahn3} studied fractional mean-field stochastic differential equations (SDEs, for short) with $H>1/2$ and a stochastic control problem. Some other recent developments of fractional BSDEs can be found in Bender \cite{Bender}, Bender and Viitasaari \cite{Bender-Viitasaari 2017}, Borkowska \cite{Borkowska}, Douissi, Wen and Shi
\cite{Douissi-Wen-Shi 2018}, Hu, Jolis and Tindel \cite{Hu6}, Hu, Nualart and Song \cite{Hu5}, Jing \cite{Jing}, Wen and Shi \cite{Wen,Wen-Shi-20}, etc., among theory and applications.
Furthermore, as a natural extension of BSDEs, Delong and Imkeller \cite{Delong,Delong2} studied BSDEs with delayed generator, owing to that mathematical delayed approaches play an important role in many fields, such as stochastic optimal control, financial risks, management of insurance, pricing and hedging (see Delong \cite{Delong3} and the references therein). Shortly after, in application, Chen and Wu \cite{Chen-Wu-10} obtained the maximum principle for stochastic optimal control problem with delay. Along this way, Huang and Shi \cite{Huang-Shi-12} get the Maximum principle for optimal control of fully coupled forward--backward stochastic differential delayed equations.

\ms

As another important development of BSDEs, fractional BSDEs with delayed generator have significant applications in stochastic optimal control problems with delay. However, to our best knowledge, no study about fractional BSDEs with delayed generator is available up to now. Therefore, in this paper, we focus on the solvability of this type of BSDEs. Specifically, we study the following fractional BSDEs with delayed generator:
\begin{equation}\label{0}
  \begin{cases}
    -dY(t)=f(t,\eta(t),Y(t),Z(t),Y(t-\delta),Z(t-\delta)) dt - Z(t) dB_{t}^{H}, \ \ 0\leq t\leq T; \\
     Y(T) = \xi, \ \ Y(t) = \varphi(t), \ \ Z(t)=\psi(t), \ \ \ \ \ \ \ \ \ \ \ \ \ \ \ \ \ \ \ \ \ \ \ \ \ \ \ \ \ \ \ \ \  -\delta\leq t< 0,
  \end{cases}
\end{equation}
where $\delta\in(0,T]$ is a time delay parameter.
We call $\xi(\cd)$ the {\it terminal value} and
$f(\cd)$ the {\it generator} of the corresponding BSDE \rf{0}. And the  {\it delayed generator} means that in \rf{0}, the generator $f(\cd)$ includes not only the values of the solution $(Y,Z)$ of the present but also the past.
It should be pointed out that the stochastic integral in (\ref{0}) is the divergence type integral
(see Decreusefond and \"{U}st\"{u}nel \cite{Decreusefond}, and  Nualart \cite{Nualart}).
In particular, we consider the case of the Hurst parameter $H>\frac{1}{2}$.
First, two different methods are proposed to study the existence and uniqueness of the equation (\ref{0}).
Note that, even in the classical BSDEs with delayed generator, as shown in Delong and Imkeller \cite{Delong},
the existence and uniqueness cannot hold for arbitrary time horizon and time delay.
Similarly, our existence and uniqueness, proved by the first method, hold under a small time horizon.
However, by introducing the second method, the existence and uniqueness hold for arbitrary time horizon.
Besides, for its wide applications to BSDEs, a comparison theorem of such type of equations is obtained.
In the coming future researches, we will focus on the application of this equation in stochastic optimal control.

\ms

This article is organized as follows.
In Section 2, some preliminaries about fractional Brownian motion are presented.
The existence and uniqueness of the equation (\ref{0}) are proved in Section 3,
and a comparison theorem for such fractional BSDEs is derived in Section 4.

\section{Preliminaries}

We recall some basic results about the fBm in this section, which are important for our following study.
For a deeper discussion, the readers may refer to the articles such as Decreusefond and \"{U}st\"{u}nel \cite{Decreusefond}, Hu \cite{Hu3} and  Nualart \cite{Nualart}.

\ms

Denote by $(\Omega,\mathcal{F},P)$ a complete probability space, and under which we assume $B^{H}=\{ B^{H}_{t},t\geq 0 \}$ is a fBm such that the filtration $\mathcal{F}$ is generated by $B^{H}$.
Let $H >\frac{1}{2}$ throughout this paper.
Moreover, we define the function $\phi(x) = H(2H - 1)|x|^{2H-2}$, where $x \in \mathbb{R}$.
Suppose $\xi:[0,T]\rightarrow \mathbb{R}$ and $\eta:[0,T]\rightarrow \mathbb{R}$ are two continuous functions.
Define
\begin{equation}\label{11}
  \langle \xi,\eta \rangle_{T} = \int_0^T \int_0^T \phi(u-v) \xi_{u} \eta_{v} dudv, \ \
  and \ \   \| \xi \|_{T}^{2} =  \langle \xi,\xi  \rangle_{T}.
\end{equation}
It is easy to know that $\langle \xi,\eta \rangle_{T}$ is a Hilbert scalar product.
So, under this scalar product, we can denote by $\mathcal{H}$ the completion of the continuous functions.
Besides, we denote by $\mathcal{P}_{T}$ the set of all polynomials of fBm over the interval $[0,T]$, i.e.,
every element of $\mathcal{P}_{T}$ has the following form
\begin{equation*}
  \Phi(\omega) = h \left(\int_0^T \xi_{1}(t) dB_{t}^{H},...,\int_0^T \xi_{n}(t) dB_{t}^{H} \right),
\end{equation*}
where $h$ is a polynomial function and $\xi_{i}\in\mathcal{H}, i=1,2,...,n$.
In addition,
Malliavin derivative operator $D_{s}^{H}$ of $\Phi\in \mathcal{P}_{T}$ is defined by:
\begin{equation*}
  D_{s}^{H}\Phi = \sum\limits_{i=1}^{n} \frac{\partial h}{\partial x_{i}}
               \left(\int_0^T \xi_{1}(t) dB_{t}^{H},...,\int_0^T \xi_{n}(t) dB_{t}^{H} \right)\xi_{i}(s), \ \ s\in [0,T].
\end{equation*}
Since the derivative operator
  $D^{H}:L^{2}(\Omega,\mathcal{F}, P) \rightarrow (\Omega,\mathcal{F}, \mathcal{H})$ is closable,
one can denote by $\mathbb{D}^{1,2}$ the completion of $\mathcal{P}_{T}$ under the following norm
$$\| \Phi \|^{2}_{1,2} = E|\Phi|^{2} + E\|D^{H}_{s} \Phi\|^{2}_{T}.$$
Furthermore, we introduce the following derivative
\begin{equation}\label{12}
  \mathbb{D}_{t}^{H}\Phi = \int_0^T \phi(t-s) D_{s}^{H}\Phi ds, \ \ t\in[0,T].
\end{equation}
Now, we introduce the adjoint operator of Malliavin derivative operator $D^{H}$.
We call this operator the divergence operator, which represents the divergence type integral and
 is denoted by $\delta(\cdot)$.

\begin{definition}
We call a process $u\in L^{2}(\Omega\times[0,T];\mathcal{H})$  belongs to $Dom(\delta)$,
if there is a random variable $\delta(u)\in L^{2}(\Omega,\mathcal{F},P)$ satisfying the duality relationship:
\begin{equation*}
  E(\Phi\delta(u))=E(\langle D^{H}_{\cdot} \Phi,u \rangle_{T}), \ \ for \ every \  \Phi\in\mathcal{P}_{T}.
\end{equation*}
Moreover, if $u\in Dom(\delta)$, we define $\int_0^T u_{s} d B^{H}_{s}:=\delta(u)$
the divergence type integral of $u$ w.r.t. $B^{H}$.
\end{definition}
It should be pointed out that, in this paper, unless otherwise specified,
 the $d B^{H}$-integral represents the divergence type integral.

\begin{proposition}[Hu \cite{Hu3}, Proposition 6.25]\label{2}
Denote by $\mathbb{L}^{1,2}_{H}$  the space of all processes $F : \Omega\times[0,T] \rightarrow \mathcal{H}$ satisfying
$$E \left( \| F \|_{T}^{2} + \int_0^T \int_0^T |\mathbb{D}_{s}^{H}F_{t}|^{2} dsdt \right) < \infty.$$
Then, if $F \in \mathbb{L}^{1,2}_{H}$, the divergence type integral
$\int_0^T F_{s} dB_{s}^{H}$ exists in $L^{2}(\Omega,\mathcal{F}, P)$, and
$$
  E \left( \int_0^T F_{s} dB_{s}^{H} \right) = 0; \ \
   E \left( \int_0^T F_{s} dB_{s}^{H} \right)^{2}
 =E \left( \| F \|_{T}^{2} + \int_0^T \int_0^T \mathbb{D}_{s}^{H}F_{t} \mathbb{D}_{t}^{H}F_{s} dsdt \right).
$$
\end{proposition}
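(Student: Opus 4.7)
The plan is to establish the proposition by proving both identities first on a convenient dense subclass of processes and then extending to all of $\mathbb{L}^{1,2}_{H}$ by a closability/continuity argument. Concretely, I would introduce the class $\mathcal{S}_{T}$ of smooth elementary processes of the form $F_{t}=\sum_{j=1}^{n}\Phi_{j}\,\xi_{j}(t)$, where $\Phi_{j}\in\mathcal{P}_{T}$ and $\xi_{j}\in\mathcal{H}$. Such processes are dense in $\mathbb{L}^{1,2}_{H}$, and on them the divergence admits the explicit representation $\delta(F)=\sum_{j=1}^{n}\Phi_{j}\int_{0}^{T}\xi_{j}(t)\,dB^{H}_{t}-\sum_{j=1}^{n}\langle D^{H}\Phi_{j},\xi_{j}\rangle_{T}$. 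The first term is a Wiener-type integral against $\xi_{j}\in\mathcal{H}$, and the correction term is exactly what is needed so that the duality relation defining $\delta$ is satisfied; this can be checked directly using the product rule for $D^{H}$ acting on polynomial functionals.

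Once the explicit formula on $\mathcal{S}_{T}$ is in hand, the identity $E[\delta(F)]=0$ follows by choosing the test variable $\Phi\equiv 1$ in the duality relation, since $D^{H}_{s}1=0$. For the variance formula, I would compute $E[\delta(F)^{2}]$ by applying duality to one copy of $\delta(F)$ and then using the Heisenberg-type commutation rule $D^{H}_{s}\delta(F)=F_{s}+\delta(D^{H}_{s}F)$, which is a standard identity for the divergence operator. Contracting against $F$ with the inner product $\langle\cdot,\cdot\rangle_{T}$ and absorbing the kernel $\phi(\cdot)$ into the definition \eqref{12} of $\mathbb{D}^{H}$ produces precisely $E\|F\|_{T}^{2}+E\int_{0}^{T}\!\int_{0}^{T}\mathbb{D}_{s}^{H}F_{t}\,\mathbb{D}_{t}^{H}F_{s}\,ds\,dt$, the second term coming from the iterated divergence contribution (whose own expectation would again vanish were it not multiplied by $F$).

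The final step is the passage from $\mathcal{S}_{T}$ to all of $\mathbb{L}^{1,2}_{H}$. The variance identity shows that $\delta:\mathcal{S}_{T}\to L^{2}(\Omega,\mathcal{F},P)$ is bounded with respect to the very norm defining $\mathbb{L}^{1,2}_{H}$, hence extends uniquely by density, and both identities are preserved in the $L^{2}$ limit. The zero-mean property is then inherited by continuity of the expectation.

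The principal technical obstacle I anticipate is controlling the cross term $\int_{0}^{T}\!\int_{0}^{T}\mathbb{D}^{H}_{s}F_{t}\,\mathbb{D}^{H}_{t}F_{s}\,ds\,dt$ arising from the commutation rule. Unlike the classical Brownian case $H=\tfrac{1}{2}$, where this contribution is absent, for $H>\tfrac{1}{2}$ the long-range-dependence kernel $\phi$ forces it to be genuinely nontrivial, and one must verify that it is both well defined (via Cauchy--Schwarz applied to $|\mathbb{D}_{s}^{H}F_{t}|^{2}$, using the integrability built into $\mathbb{L}^{1,2}_{H}$) and compatible with $\|F\|_{T}^{2}$ so as to yield a non-negative $L^{2}$-norm. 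The bookkeeping of the two different derivatives $D^{H}$ and $\mathbb{D}^{H}$, related through convolution with $\phi$, is where the subtle analytic work sits.
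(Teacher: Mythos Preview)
The paper does not supply its own proof of this proposition: it is quoted verbatim from Hu~\cite{Hu3}, Proposition~6.25, and is treated as a preliminary result with no argument given. There is therefore nothing in the paper to compare your proposal against.

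That said, your outline is the standard route taken in the Malliavin calculus literature (and in particular in Hu's memoir and in Nualart~\cite{Nualart}): one establishes the explicit formula $\delta(F)=\sum_{j}\Phi_{j}\int_{0}^{T}\xi_{j}\,dB^{H}-\sum_{j}\langle D^{H}\Phi_{j},\xi_{j}\rangle_{T}$ on smooth elementary processes, reads off $E[\delta(F)]=0$ from duality with $\Phi\equiv 1$, derives the isometry via the commutation relation $D^{H}\delta(F)=F+\delta(D^{H}F)$, and closes by density using the bound the isometry itself provides. Your identification of the cross term $\int\!\!\int \mathbb{D}^{H}_{s}F_{t}\,\mathbb{D}^{H}_{t}F_{s}\,ds\,dt$ as the main analytic point, controlled by Cauchy--Schwarz against the $\mathbb{L}^{1,2}_{H}$ norm, is also correct. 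So the proposal is sound and aligns with the original source, even though the present paper simply cites the result.
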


\begin{proposition}[Hu \cite{Hu3}, Theorem 11.1]\label{10}
 For $i = 1, 2$, let $g_{i}$ and $f_{i}$ be two real valued processes satisfying
 $E \int_0^T (|g_{i}(s)|^{2} + |f_{i}(s)|^{2}) ds < \infty$.
Moreover, assume that $D^{H}_{t}f_{i}(s)$ is continuously differentiable in its arguments
 $(s,t)\in [0,T]^{2}$ for almost every $\omega \in \Omega$, and
$E \int_0^T \int_0^T |\mathbb{D}_{t}^{H}f_{i}(s)|^{2} dsdt < \infty$.
Denote
\begin{equation*}
  X_{i}(t) = \int_0^t g_{i}(s) ds + \int_0^t f_{i}(s) dB_{s}^{H}, \ \ t\in [0,T].
\end{equation*}
Then
\begin{equation*}
\begin{split}
   X_{1}(t)X_{2}(t) =& \int_0^t X_{1}(s)g_{2}(s) ds + \int_0^t X_{1}(s)f_{2}(s) dB_{s}^{H}
                      +\int_0^t X_{2}(s)g_{1}(s) ds \\
                    &+ \int_0^t X_{2}(s)f_{1}(s) dB_{s}^{H} +\int_0^t \mathbb{D}_{s}^{H}X_{1}(s)g_{2}(s) ds + \int_0^t \mathbb{D}_{s}^{H}X_{2}(s)g_{1}(s) ds.
\end{split}
\end{equation*}
\end{proposition}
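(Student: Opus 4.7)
The plan is to derive the product formula from the scalar Itô formula for divergence-type fBm integrals together with a polarization argument. The scalar formula, which is the natural companion to this result in Hu's monograph, asserts that for $X(t) = \int_0^t g(s)\,ds + \int_0^t f(s)\,dB_s^H$ under analogous integrability conditions, and $F \in C^2(\mathbb{R})$,
\begin{equation*}
F(X(t)) = F(0) + \int_0^t F'(X(s)) g(s)\,ds + \int_0^t F'(X(s)) f(s)\,dB_s^H + \int_0^t F''(X(s)) \mathbb{D}_s^H X(s) f(s)\,ds.
\end{equation*}

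First I would apply this scalar Itô formula with $F(x)=x^2$ to the three processes $X_1$, $X_2$, and $X_1+X_2$; the sum is itself of the required form, with drift $g_1+g_2$ and integrand $f_1+f_2$, and inherits the required integrability by linearity of $\mathbb{D}^H$ together with the triangle inequality. Expanding the polarization identity $2 X_1(t)X_2(t) = (X_1+X_2)^2(t) - X_1^2(t) - X_2^2(t)$, the diagonal contributions coming from each $X_i$ cancel against the $-X_i^2(t)$ terms, so only the cross terms survive. Those cross terms are exactly the six integrals appearing in the conclusion of the proposition, and dividing by $2$ produces the stated identity.

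The main obstacle is not the algebra, which is mechanical, but the verification that every divergence integral appearing along the way actually lies in $L^2(\Omega,\mathcal{F},P)$, so that Proposition \ref{2} applies and the integrals are well-defined. This forces one to check that the integrands $X_i(s) f_j(s)$ lie in $\mathbb{L}^{1,2}_H$; under the hypotheses on $g_i$, $f_i$ and $\mathbb{D}_t^H f_i$ stated in the proposition, this reduces to the chain rule and Leibniz rule for the Malliavin derivative $D^H$, together with Minkowski-type estimates against the kernel $\phi$. The deepest input is the scalar Itô formula itself, which Hu proves by approximating $X$ by step processes, performing a second-order Taylor expansion, and controlling the quadratic correction via the covariance structure of $B^H$ and the operator $\mathbb{D}^H$ introduced in \eqref{12}; I would cite that result and focus the write-up on the polarization step and the integrability bookkeeping just described.
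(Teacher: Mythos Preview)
The paper does not prove this proposition at all: it is quoted as Theorem 11.1 of Hu's monograph \cite{Hu3} and used as a black box in the It\^o computations of Section~3. So there is no proof in the paper to compare your proposal against.

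Your polarization strategy is the standard route from the scalar It\^o formula to the product rule, and the integrability bookkeeping you outline (showing $X_i f_j\in\mathbb{L}^{1,2}_H$ via the Leibniz rule for $D^H$ and estimates against the kernel $\phi$) is exactly what one must check. One point worth flagging: if you carry out the polarization with the scalar formula you wrote, the cross correction terms come out as
\[
\int_0^t \mathbb{D}_s^H X_1(s)\,f_2(s)\,ds+\int_0^t \mathbb{D}_s^H X_2(s)\,f_1(s)\,ds,
\]
i.e.\ with the diffusion coefficients $f_i$, not the drifts $g_i$ that appear in the last two integrals of the displayed statement. That is the correct form, and it is also the form actually used later in the paper: in the proof of Theorem~\ref{30} the It\^o formula is applied to $e^{\beta t}\hat Y_t^2$ and produces the correction $2\int_t^T e^{\beta s}\mathbb{D}_s^H\hat Y_s\,\hat Z_s\,ds$, with $\hat Z_s$ the $dB^H$-integrand, not the drift. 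So the $g_i$ in the printed proposition are a transcription slip; your argument is sound and will reproduce the formula in the version that is actually needed.
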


\section{Existence and uniqueness}

In this section, we study the solvability of the fractional BSDE (\ref{0}).
In particular, we shall propose two different methods to prove the existence and uniqueness of  (\ref{0}).
And at the end of this section, we will make a comparison for these two methods.
In order to do this, let $\eta_{0}$ be a constant, and $b:[0,T]\rightarrow \mathbb{R}$ and $\sigma:[0,T]\rightarrow \mathbb{R}$ be two deterministic differentiable functions with $\sigma_{t} \neq 0$ (then either $\sigma_{t} < 0$ or $\sigma_{t} > 0$). Let
\begin{equation}\label{33}
\eta_{t} = \eta_{0} + \int_0^t b_{s} ds + \int_0^t \sigma_{s} dB_{s}^{H}, \q~ t\in [0,T].
\end{equation}
We recall that (see (\ref{11})),
\begin{equation*}
  \| \sigma \|_{t}^{2} = \int_0^t \int_0^t\phi(u-v)  \sigma_{u} \sigma_{v} dudv
  = H(2H-1) \int_0^t \int_0^t |u-v|^{2H-2} \sigma_{u} \sigma_{v} dudv,
\end{equation*}
therefore, $\frac{d}{dt}(\| \sigma \|_{t}^{2})= 2 \hat{\sigma}_{t} \sigma_{t}>0$ for $t\in (0,T]$, where
$\hat{\sigma}_{t} = \int_0^t \phi(t-v) \sigma_{v} dv$.

\ms

In the following, we study the equation (\ref{0}).
And for the sake of convenience, we first investigate the fractional BSDE with delayed generator as follows:
\begin{equation}\label{1}
  \begin{cases}
    -dY(t)=f(t,\eta(t),Y(t-\delta),Z(t-\delta)) dt - Z(t) dB_{t}^{H}, \q~ 0\leq t\leq T; \\
     Y(T) = \xi, \ \ Y(t) = \varphi(t), \ \ Z(t)=\psi(t), \q~ \ \ \ \ \ \ \ \ \ \ \ \ \ \ \ \ -\delta\leq t< 0,
  \end{cases}
\end{equation}
where the time delay $\delta\in(0,T]$.
We introduce the following sets: for $t_{1}\leq t_{2}$,
\begin{itemize}
  \item [$\bullet$] $C^{1,3}_{pol}([t_{1},t_{2}] \times \mathbb{R})$
        is the space of all $C^{1,3}$-functions over $[t_{1},t_{2}] \times \mathbb{R}$, which together with their derivatives are of polynomial growth;
  \item [$\bullet$] $\mathcal{V}_{[t_{1},t_{2}]}
  := \bigg \{ Y=\phi\big(\cdot,\eta(|\cdot|)\big) \big| \phi \in C_{pol}^{1,3}([t_{1},t_{2}]\times \mathbb{R})
  \  with \ \frac{\partial \phi}{\partial t}  \in
    C_{pol}^{0,1}([t_{1},t_{2}]$ $\times \mathbb{R}), t_{1}\leq t\leq t_{2} \bigg \},$
\end{itemize}
and by $\widetilde{\mathcal{V}}_{[t_{1},t_{2}]}$ and $\widetilde{\mathcal{V}}_{[t_{1},t_{2}]}^{H}$  denote the completion of $\mathcal{V}_{[t_{1},t_{2}]}$ under the following norm respectively,
\begin{equation*}
  \| Y \| \deq \bigg(\int_{t_{1}}^{t_{2}}  e^{\beta t} \mathbb{E} |Y(t)|^{2} dt\bigg)^{\frac{1}{2}}, \ \ \ \
  \| Z \| \deq \bigg(\int_{t_{1}}^{t_{2}} |t|^{2H-1} e^{\beta t} \mathbb{E} |Z(t)|^{2} dt\bigg)^{\frac{1}{2}},
\end{equation*}
where $\beta\geq 0$ is a constant.
Then  $\widetilde{\mathcal{V}}_{[t_{1},t_{2}]}^{H}$ and
$\widetilde{\mathcal{V}}_{[t_{1},t_{2}]}$ are Banach spaces.
A pair $(Y_{\cdot},Z_{\cdot})\in\widetilde{\mathcal{V}}_{[-\delta,T]} \times \widetilde{\mathcal{V}}_{[-\delta,T]}^{H}$
is said to be a solution of the equation (\ref{1}), if it satisfies (\ref{1}).


\subsection{The first method}

In this subsection, the first method is used to prove the existence and uniqueness of the equation (\ref{1}).
In order to do this, the following assumptions are needed.

\begin{itemize}
  \item [(H1)] $\xi=h(\eta(T))$, where $h\in C^{3}_{pol}(\mathbb{R})$ and,
               $\varphi\in \widetilde{\mathcal{V}}_{[-\delta,0]}$ and
               $\psi\in \widetilde{\mathcal{V}}_{[-\delta,0]}^{H}$.
\end{itemize}
\begin{itemize}
  \item [(H2)] The generator $f(t,x,y,z):[0,T]\times \mathbb{R}^{3}\rightarrow \mathbb{R}$ is a $C_{pol}^{0,1}$-continuous function. Moreover, there exists $L\geq 0$ such that $f$ satisfies the following Lipschitz condition:
\begin{equation*}\label{H}
|f(t,x,y,z) - f(t,x,y',z')| \leq L\big(|y-y'| + |z-z'| \big), \ \ \forall t\in [0,T], x,y,y',z,z' \in \mathbb{R}.
\end{equation*}
\end{itemize}
\begin{theorem}\label{30}
Under (H1) and (H2), for a sufficiently small time horizon $T$,
Eq. (\ref{1}) admits a unique solution.
\end{theorem}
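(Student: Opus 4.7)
The plan is to obtain the solution of (\ref{1}) as the unique fixed point of an operator $\Gamma$ on the Banach space $\widetilde{\mathcal{V}}_{[-\delta,T]}\times\widetilde{\mathcal{V}}_{[-\delta,T]}^{H}$, with the parameter $\beta\ge 0$ in the norm to be chosen later.

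First, for each pair $(y,z)$ in this space that matches the prescribed initial data $\varphi,\psi$ on $[-\delta,0)$, I would set $\Gamma(y,z):=(Y,Z)$ to be the unique solution of the \emph{non-delayed} fractional BSDE
\begin{equation*}
  -dY(t)=f\bigl(t,\eta(t),y(t-\delta),z(t-\delta)\bigr)\,dt-Z(t)\,dB_{t}^{H},\qquad Y(T)=\xi,
\end{equation*}
extended by $\varphi,\psi$ on $[-\delta,0)$. Since the generator depends on the fixed data $(y(t-\delta),z(t-\delta))$ but not on the unknown $(Y(t),Z(t))$, solvability in $\widetilde{\mathcal{V}}\times\widetilde{\mathcal{V}}^{H}$ falls within the classical framework of Hu--Peng \cite{Hu} and Maticiuc--Nie \cite{Maticiuc}; one first works on a smooth dense subclass of inputs (so that the frozen generator lies in $\mathbb{L}^{1,2}_{H}$) and extends by density using the isometry in Proposition \ref{2}.

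Second, I would show $\Gamma$ is a contraction for $T$ small. Given two inputs $(y_i,z_i)$ with outputs $(Y_i,Z_i)$, set $\Delta Y=Y_1-Y_2$ and $\Delta Z=Z_1-Z_2$. Then $(\Delta Y,\Delta Z)$ satisfies a linear fractional BSDE with terminal value $0$, vanishing initial data on $[-\delta,0)$, and driver
\begin{equation*}
  \Delta f(t):=f\bigl(t,\eta(t),y_1(t-\delta),z_1(t-\delta)\bigr)-f\bigl(t,\eta(t),y_2(t-\delta),z_2(t-\delta)\bigr).
\end{equation*}
Applying the fractional It\^o product formula of Proposition \ref{10} to $e^{\beta t}|\Delta Y(t)|^2$, taking expectations so that the divergence integral is annihilated by Proposition \ref{2}, and invoking the Lipschitz hypothesis (H2) together with elementary inequalities should yield an estimate of the form
\begin{equation*}
  \|\Delta Y\|^2+\|\Delta Z\|^2\le K(T,\beta,L,H)\,\bigl(\|\Delta y\|^2+\|\Delta z\|^2\bigr),
\end{equation*}
where, after a change of variable $s\mapsto s-\delta$ inside the delayed terms and a suitable choice of $\beta$, $K(T,\beta,L,H)\to 0$ as $T\to 0$. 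Banach's fixed-point theorem then produces a unique $(Y,Z)$ with $\Gamma(Y,Z)=(Y,Z)$, which is the desired solution of (\ref{1}).

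The principal obstacle is obtaining the contraction estimate with a constant that genuinely vanishes as $T\to 0$. The $Z$-norm carries the asymmetric weight $|t|^{2H-1}$, so bounding $\int_0^{T}|t|^{2H-1}e^{\beta t}\mathbb{E}|\Delta z(t-\delta)|^{2}\,dt$ by $\|\Delta z\|^{2}$ requires controlling the ratio of $|t|^{2H-1}$ to $|t-\delta|^{2H-1}$ under the shift, and one must also track the Malliavin cross-terms $\int_0^t \mathbb{D}_s^H\Delta Y(s)\,\Delta f(s)\,ds$ that Proposition \ref{10} introduces. A secondary technicality is verifying that the frozen driver $f(\cdot,\eta(\cdot),y(\cdot-\delta),z(\cdot-\delta))$ retains enough Malliavin regularity to lie in $\mathbb{L}^{1,2}_{H}$, which is precisely where the smoothness built into $\mathcal{V}_{[t_1,t_2]}$ and the density/completion argument are essential.
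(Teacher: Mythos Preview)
Your fixed-point strategy is exactly the paper's: freeze the delayed arguments, solve the resulting driver-only fractional BSDE via Maticiuc--Nie, apply the It\^o product rule to $e^{\beta t}|\hat Y_t|^2$, and squeeze out a contraction for small $T$. Two technical points deserve correction or sharpening.

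First, the Malliavin cross-term you wrote is the wrong pairing. The It\^o formula of Proposition~\ref{10} (with $d\hat Y_s=-\Delta f(s)\,ds+\hat Z_s\,dB^H_s$) produces $2\int_t^T e^{\beta s}\,\mathbb{D}_s^H\hat Y_s\cdot \hat Z_s\,ds$, not $\mathbb{D}_s^H\hat Y_s\cdot\Delta f(s)$. This matters: the paper then invokes the structural identity $\mathbb{D}_s^H\hat Y_s=\frac{\hat\sigma_s}{\sigma_s}\hat Z_s$ from \cite{Hu,Maticiuc}, together with the two-sided bound $M^{-1}s^{2H-1}\le \hat\sigma_s/\sigma_s\le M s^{2H-1}$, so that the cross-term becomes $\ge \frac{2}{M}\int_t^T e^{\beta s}s^{2H-1}|\hat Z_s|^2\,ds$ and moves to the \emph{left} side. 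That is precisely where the control of $\|\hat Z\|$ comes from; without this identity your estimate has no $\hat Z$-term to absorb.

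Second, after (H2) the right-hand side contains $|\hat Y_s|\,|\hat z_{s-\delta}|$ with no weight on $\hat z$, while the norm carries $|s|^{2H-1}$; the obstruction is that the weighted norm is \emph{weaker} near $s=0$, so a naive Young-inequality split does not close. The paper handles this in two steps: it first applies a Bihari-type lemma (Lemma~20 of \cite{Maticiuc}) to the inequality $x(t)^2\le 2L\int_t^T x(s)\,[\cdots]^{1/2}\,ds$ with $x(t)=(e^{\beta t}\mathbb{E}|\hat Y_t|^2)^{1/2}$, obtaining a pointwise bound on $x(t)$; it then integrates this bound both plainly and against the singular factor $|s-\delta|^{1-2H}$ (which is integrable since $2H-1<1$), and uses H\"older to recover the weighted $\hat z$-norm. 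The resulting constant has the form $C_1 v e^{\beta T}+C_2 v^{-1}e^{\beta T}\bigl(T+\frac{2T^{2-2H}}{2-2H}\bigr)^2$, so one first fixes $v$ small and then takes $T$ small. Your description of the obstacle (``ratio of $|t|^{2H-1}$ to $|t-\delta|^{2H-1}$'') points in the right direction but misses that the real issue is converting an unweighted $|\hat z|$ into the weighted norm, not comparing two weights.
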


\begin{proof}
For a given pair $(y_{\cdot},z_{\cdot}) \in \widetilde{\mathcal{V}}_{[-\delta,T]} \times \widetilde{\mathcal{V}}_{[-\delta,T]}^{H}$,
we consider the following BSDE:
\begin{equation}\label{3}
  \begin{cases}
    -dY_{t}=g(t,\eta_t)dt - Z_{t} dB_{t}^{H}, \qq\qq 0\leq t\leq T; \\
     Y_{t} = \xi, \ \ Y_{t} = \varphi(t), \ \ Z_{t}=\psi(t),  \qq  -\delta\leq t< 0,
  \end{cases}
\end{equation}
where
$$g(t,\eta_t)=f(t,\eta_{t},y_{t-\delta},z_{t-\delta}),\q~0\leq t\leq T.$$
Note that $(y_{\cdot},z_{\cdot})$ and $\d$ are given.
From Proposition 17 of Maticiuc and Nie \cite{Maticiuc},
and note that $Y_{\cdot}$ and $Z_{\cdot}$ are given in $[-\delta,0)$,
we see that Eq. (\ref{3}) has a unique solution $(Y_{\cdot},Z_{\cdot})$.
Then, we can define a mapping
$$I:\widetilde{\mathcal{V}}_{[-\delta,T]} \times \widetilde{\mathcal{V}}_{[-\delta,T]}^{H}\longrightarrow
\widetilde{\mathcal{V}}_{[-\delta,T]} \times \widetilde{\mathcal{V}}_{[-\delta,T]}^{H}$$
such that $I[(y_{\cdot},z_{\cdot})]=(Y_{\cdot},Z_{\cdot})$.
It is easy to know that, if we can prove $I$ is a contraction mapping on $\widetilde{\mathcal{V}}_{[-\delta,T]} \times \widetilde{\mathcal{V}}_{[-\delta,T]}^{H}$, then the desired result obtained.
So, in the following, we are going to show that $I$ is a contraction mapping.

\ms

For arbitrary pairs $(y_{\cdot},z_{\cdot})$ and
 $(y'_{\cdot},z'_{\cdot})$ in $\widetilde{\mathcal{V}}_{[-\delta,T]} \times \widetilde{\mathcal{V}}_{[-\delta,T]}^{H}$,
we let $$I[(y_{\cdot},z_{\cdot})]=(Y_{\cdot},Z_{\cdot}), \ \ \ I[(y'_{\cdot},z'_{\cdot})]=(Y'_{\cdot},Z'_{\cdot}).$$
And define
$$\ba{ll}
\ds \hat{y}_{\cdot}\triangleq y_{\cdot}-y'_{\cdot}, \q~ \hat{z}_{\cdot}\triangleq z_{\cdot}-z'_{\cdot},\\
\ns\ds\hat{Y}_{\cdot}\triangleq Y_{\cdot}-Y'_{\cdot}, \q~ \hat{Z}_{\cdot}\triangleq Z_{\cdot}-Z'_{\cdot}.
\ea$$
Now, by applying It\^{o} formula (Proposition \ref{10}) and taking expectation, we obtain
$$\ba{ll}
\ds \mathbb{E}\left(e^{\beta t}\hat{Y}_{t}^{2} + \beta \int_t^T e^{\beta s}\hat{Y}_{s}^{2} ds
     + 2\int_t^T e^{\beta s}\mathbb{D}_{s}^{H} \hat{Y}_{s}\hat{Z}_{s} ds\right)\\
\ns\ds=2 \mathbb{E}\int_t^T e^{\beta s}
      \hat{Y}_{s}\big[f(s,\eta_{s},y_{s-\delta},z_{s-\delta}) - f(s,\eta_{s},y'_{s-\delta},z'_{s-\delta})\big] ds.
\ea$$
The main difficulty in the above equation comes from the term of the Malliavin derivative $\mathbb{D}_{s}^{H} \hat{Y}_{s}$.
By virtue of the result obtained in  Hu and Peng \cite{Hu}, and Maticiuc and Nie \cite{Maticiuc}, we have the relation
$\mathbb{D}_{s}^{H} \hat{Y}_{s} = \frac{\hat{\sigma}_{s}}{\sigma_{s}} \hat{Z}_{s}$ (see \cite{Hu,Maticiuc} for detail).
Furthermore, from Remark 6 of \cite{Maticiuc}, there is a constant $M>0$ such that
\begin{equation*}
\frac{s^{2H-1}}{M}\leq \frac{\hat{\sigma}_{s}}{\sigma_{s}}\leq M s^{2H-1}, \ \ \forall s\in [0,T].
\end{equation*}
In the following, for the technical reasons, we choose $M>2$. Then, we deduce
$$\ba{ll}
\ds\mathbb{E}\left(e^{\beta t}\hat{Y}_{t}^{2} + \beta \int_t^T e^{\beta s}\hat{Y}_{s}^{2} ds
     + \frac{2}{M}\int_t^T e^{\beta s}s^{2H-1}\hat{Z}_{s}^{2} ds\right)\\
\ns\ds\leq  2 \mathbb{E}\int_t^T e^{\beta s}
      \hat{Y}_{s}\big[f(s,\eta_{s},y_{s-\delta},z_{s-\delta}) - f(s,\eta_{s},y'_{s-\delta},z'_{s-\delta})\big] ds.
\ea$$
So by choosing $\beta>1$, from (H2), we have
\bel{38}\ba{ll}
\ds\mathbb{E}\left(e^{\beta t}|\hat{Y}_{t}|^{2} + \int_t^T e^{\beta s}|\hat{Y}_{s}|^{2} ds
       + \frac{2}{M}\int_t^T e^{\beta s}s^{2H-1}|\hat{Z}_{s}|^{2} ds\right) \\
\ns\ds\leq2L\int_t^Te^{\beta s}\mathbb{E}\left[|\hat{Y}_{s}|(|\hat{y}_{s-\delta}|+|\hat{z}_{s-\delta}|)\right]ds\\
\ns\ds\leq 2L\int_t^T \big(e^{\beta s} \mathbb{E}|\hat{Y}_{s}|^{2}\big)^{\frac{1}{2}}
       \cdot \left[e^{\beta s} \mathbb{E}(|\hat{y}_{s-\delta}| + |\hat{z}_{s-\delta}|)^{2}\right]^{\frac{1}{2}} ds.
\ea\ee
Denote $x(t)=\big(e^{\beta t} \mathbb{E}|\hat{Y}_{t}|^{2}\big)^{\frac{1}{2}}$. Then from (\ref{38}),
\begin{equation}\label{42}
  x(t)^{2}\leq  2L\int_t^T x(s) \left[e^{\beta s} \mathbb{E}(|\hat{y}_{s-\delta}| + |\hat{z}_{s-\delta}|)^{2}\right]^{\frac{1}{2}} ds.
\end{equation}
By applying Lemma 20 of \cite{Maticiuc} to (\ref{42}), it follows that
$$\ba{ll}
\ds x(t)\leq L\int_t^T \left[e^{\beta s} \mathbb{E}(|\hat{y}_{s-\delta}| + |\hat{z}_{s-\delta}|)^{2}\right]^{\frac{1}{2}} ds\\
\ns\ds\qq \leq \sqrt{2}L \int_t^T \left[e^{\beta s} \mathbb{E}(|\hat{y}_{s-\delta}|^{2} + |\hat{z}_{s-\delta}|^{2})\right]^{\frac{1}{2}} ds.
\ea$$
Therefore
\begin{equation}\label{8}
  x(t)^{2}\leq 2L^{2} \bigg(\int_{0}^{T} \left[e^{\beta s} \mathbb{E}(|\hat{y}_{s-\delta}|^{2} + |\hat{z}_{s-\delta}|^{2})\right]^{\frac{1}{2}} ds\bigg)^{2}, \ \ t\in[0,T].
\end{equation}
From the inequality $\sqrt{|a|+|b|}\leq \sqrt{|a|}+\sqrt{|b|}$ and H\"{o}lder's inequality, note that $\delta\leq T$, one has
\bel{4}\ba{ll}
\ds\bigg(\int_{0}^T \left[e^{\beta s}
   \mathbb{E}(|\hat{y}_{s-\delta}|^{2} + |\hat{z}_{s-\delta}|^{2})\right]^{\frac{1}{2}} ds\bigg)^{2}\\
\ns\ds\leq e^{\beta \delta} \bigg(\int_{-\delta}^T \left[e^{\beta s} \mathbb{E}(|\hat{y}_{s}|^{2}+|\hat{z}_{s}|^{2})\right]^{\frac{1}{2}} ds\bigg)^{2}\\
\ns\ds\leq e^{\beta \delta} \bigg(\int_{-\delta}^T \left[e^{\beta s} \mathbb{E}|\hat{y}_{s}|^{2}\right]^{\frac{1}{2}} ds+\int_{-\delta}^T \left[e^{\beta s} \mathbb{E}|\hat{z}_{s}|^{2}\right]^{\frac{1}{2}}ds\bigg)^{2} \\
\ns\ds\leq 2e^{\beta \delta}\bigg[\bigg(\int_{-\delta}^T \left[e^{\beta s} \mathbb{E}|\hat{y}_{s}|^{2} \right]^{\frac{1}{2}} ds \bigg)^{2}+\bigg(\int_{-\delta}^T \big[|s|^{1-2H} \cdot e^{\beta s} |s|^{2H-1}\mathbb{E}|\hat{z}_{s}|^{2}\big]^{\frac{1}{2}}ds\bigg)^{2}\bigg] \\
\ns\ds\leq 2e^{\beta \delta}\bigg[(T+\delta)\int_{-\delta}^Te^{\beta s} \mathbb{E}|\hat{y}_{s}|^{2} ds
+\frac{T^{2-2H}+\delta^{2-2H}}{2-2H} \int_{-\delta}^Te^{\beta s} |s|^{2H-1}\mathbb{E}|\hat{z}_{s}|^{2} ds\bigg]\\
\ns\ds\leq 2e^{\beta T}\bigg(2T+\frac{2T^{2-2H}}{2-2H}\bigg)
\int_{-\delta}^Te^{\beta s} \mathbb{E} \left(|\hat{y}_{s}|^{2} + |s|^{2H-1}|\hat{z}_{s}|^{2}\right) ds.
\ea\ee
Then combining (\ref{8}) and  (\ref{4}), one has
\begin{equation}\label{9}
 \int_{0}^T x(s)^{2} ds \leq 8L^{2}Te^{\beta T}\bigg(T+\frac{T^{2-2H}}{2-2H}\bigg)
\int_{-\delta}^Te^{\beta s} \mathbb{E} \left(|\hat{y}_{s}|^{2} + |s|^{2H-1}|\hat{z}_{s}|^{2}\right) ds.
\end{equation}
Again, from (\ref{8}) and  (\ref{4}), similar as the above discussion, we obtain
\bel{5}\ba{ll}
\ds\int_{0}^T |s-\delta|^{1-2H} x(s)^{2} ds \\
\ns\ds\leq 2L^{2}\int_{0}^T |s-\delta|^{1-2H} ds \cdot\bigg(\int_{0}^T \left[e^{\beta s} \mathbb{E}(|\hat{y}_{s-\delta}|^{2} + |\hat{z}_{s-\delta}|^{2})\right]^{\frac{1}{2}} ds\bigg)^{2}\\
\ns\ds \leq 2L^{2}\int_{-\delta}^T |s|^{1-2H} ds \cdot\bigg(\int_{0}^T \left[e^{\beta s} \mathbb{E}(|\hat{y}_{s-\delta}|^{2} + |\hat{z}_{s-\delta}|^{2})\right]^{\frac{1}{2}} ds\bigg)^{2} \\
\ns\ds\leq 8L^{2}e^{\beta T}\bigg(T+\frac{T^{2-2H}}{2-2H}\bigg) \cdot \frac{2T^{2-2H}}{2-2H}
       \int_{-\delta}^Te^{\beta s} \mathbb{E} \left(|\hat{y}_{s}|^{2} + |s|^{2H-1}|\hat{z}_{s}|^{2}\right) ds.
\ea\ee
Now, by combining (\ref{38}), (\ref{9}) and (\ref{5}), one has
$$\ba{ll}
\ds\mathbb{E}\left( \int_{0}^T e^{\beta s}|\hat{Y}_{s}|^{2} ds
       + \frac{2}{M}\int_{0}^T e^{\beta s}s^{2H-1}|\hat{Z}_{s}|^{2} ds\right)\\
\ns\ds\leq  2L \mathbb{E}\int_{0}^T e^{\beta s} \bigg(\frac{1}{v}\big(1+|s-\delta|^{1-2H} \big)|\hat{Y}_{s}|^{2}
        + v|\hat{y}_{s-\delta}|^{2} + v|s-\delta|^{2H-1}|\hat{z}_{s-\delta}|^{2}  \bigg) ds \\
\ns\ds\leq  \frac{2L}{v}\mathbb{E}\int_{0}^T e^{\beta s}\big(1+|s-\delta|^{1-2H}\big)|\hat{Y}_{s}|^{2} ds
  + 2 Lv e^{\beta \delta} \mathbb{E}\int_{-\delta}^T e^{\beta s}\left(|\hat{y}_{s}|^{2} + |s|^{2H-1}|\hat{z}_{s}|^{2}\right) ds\\
\ns\ds\leq \widetilde{L}
     \cdot\mathbb{E}\int_{-\delta}^T e^{\beta s}\left(|\hat{y}_{s}|^{2} + |s|^{2H-1}|\hat{z}_{s}|^{2}\right) ds,
\ea$$
where $v>0$ is a constant, and
\begin{equation*}
  \widetilde{L}= \frac{16 L^{3}}{v}e^{\beta T}\bigg(T+\frac{T^{2-2H}}{2-2H}\bigg)
\cdot \bigg(T+ \frac{2T^{2-2H}}{2-2H}\bigg) +  2L v e^{\beta T}.
\end{equation*}
Note that $M>2$, and $\hat{Y}_{s}=0$ and  $\hat{Z}_{s}=0$ when $s\in[-\delta,0)$,
we obtain
$$\ba{ll}
\ds\mathbb{E} \int_{-\delta}^T e^{\beta s}\left(|\hat{Y}_{s}|^{2} + |s|^{2H-1}|\hat{Z}_{s}|^{2}\right) ds\\
\ns\ds\leq \bigg[\frac{8 L^{3}}{v}M e^{\beta T}\bigg(T+\frac{2T^{2-2H}}{2-2H}\bigg)^{2} +  LM v e^{\beta T} \bigg]\cdot
     \mathbb{E}\int_{-\delta}^T e^{\beta s}\left(|\hat{y}_{s}|^{2} + |s|^{2H-1}|\hat{z}_{s}|^{2}\right) ds.
\ea$$
Choose $v$ such that $ LM v e^{\beta T} <\frac{1}{4}$,
and $T$ sufficiently small such that
\begin{equation*}
\frac{8 L^{3}}{v}M e^{\beta T}\bigg(T+ \frac{2T^{2-2H}}{2-2H}\bigg)^{2} <\frac{1}{4}.
\end{equation*}
Then
$$
     \mathbb{E} \int_{-\delta}^T e^{\beta s}\left(|\hat{Y}_{s}|^{2} + |s|^{2H-1}|\hat{Z}_{s}|^{2}\right) ds
\leq \frac{1}{2} \mathbb{E}\int_{-\delta}^T e^{\beta s} \left(|\hat{y}_{s}|^{2} + |s|^{2H-1}|\hat{z}_{s}|^{2}\right) ds.
$$
Hence $I$ is a contraction mapping on
$\widetilde{\mathcal{V}}_{[-\delta,T]} \times \widetilde{\mathcal{V}}^{H}_{[-\delta,T]}$,
which implies (\ref{1}) admits a unique solution.
This completes the proof.
\end{proof}

\begin{remark}
One may note that, in Theorem \ref{30}, the time horizon $T$ needs to be sufficiently small.
Next, we introduce the second method to study (\ref{1}),
where the existence and uniqueness of (\ref{1}) hold for arbitrary time horizon $T$.
\end{remark}

\subsection{The second method}

In this subsection, we introduce another method to prove the solvability of BSDE (\ref{1}).
It should be pointed out that this method is more convenient than the first one.
However, the price of doing this is that we should strengthen the condition of the coefficient $f$ w.r.t. $z$.

\begin{itemize}
  \item [(H3)] The generator $f(t,x,y,z):[0,T]\times \mathbb{R}^{3}\rightarrow \mathbb{R}$ is a $C_{pol}^{0,1}$-continuous function. Moreover, there exists $L\geq 0$ such that $f$ satisfies the following condition:
\bel{14}\ba{ll}
\ds|f(t,x,y,z) - f(t,x,y',z')|^{2} \leq L\big(|y-y'|^{2} + |t-\delta|^{2H-1}|z-z'|^{2} \big), \\
\ns\ds\qq\qq\qq \forall t\in [0,T], x,y,y',z,z' \in \mathbb{R}.
\ea\ee
\end{itemize}

We present the following result, which will be used frequently in the following study.
For the detailed proof of the following lemma, the readers may refer to Lemma 3.1 of Wen and Shi \cite{Wen}.

\begin{lemma}\label{35}
Suppose $h$ is a $C^{1}_{pol}(\mathbb{R})$-function and $f$ is a $C_{pol}^{0,1}([0,T]\times \mathbb{R})$-function.
Then BSDE
\begin{equation*}
    Y_{t}=h(\eta_{T}) + \int_t^T f(s,\eta_{s}) ds - \int_t^T Z_{s} dB_{s}^{H},
\end{equation*}
has a unique solution in $\widetilde{\mathcal{V}}_{[0,T]} \times \widetilde{\mathcal{V}}^{H}_{[0,T]}$.
Moreover,
\bel{34}\ba{ll}
\\ds\mathbb{E}\left(e^{\beta t}|Y_{t}|^{2} + \frac{\beta}{2}\int_t^T e^{\beta s}|Y_{s}|^{2} ds
    + \frac{2}{M}\int_t^Te^{\beta s}s^{2H-1}|Z_{s}|^{2} ds\right)\\
\ns\ds\leq \mathbb{E}\left(e^{\beta T}|h(\eta_{T})|^{2}
     + \frac{2}{\beta}\int_t^T e^{\beta s}|f(s,\eta_{s})|^{2} ds \right),
\ea\ee
where $M, \beta> 0$ are constants.
\end{lemma}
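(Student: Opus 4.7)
The plan is to construct an explicit candidate solution as a smooth function of the Gaussian process $\eta$, verify it satisfies the BSDE via the It\^{o}-type formula in Proposition \ref{10}, and derive the energy estimate (\ref{34}); uniqueness is then a corollary of the estimate.

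For existence, I would define the deterministic function
\begin{equation*}
u(t,x) = \mathbb{E}\Big[h\big(\eta_T^{t,x}\big) + \int_t^T f\big(s,\eta_s^{t,x}\big)\,ds\Big],
\end{equation*}
where $\eta_s^{t,x}$ is the process in (\ref{33}) started from the deterministic value $x$ at time $t$. Because $\eta_s^{t,x}$ is Gaussian with mean affine in $x$ and deterministic variance, the $C^1_{pol}$ regularity of $h$ and $C^{0,1}_{pol}$ regularity of $f$ transfer to $u\in C^{1,2}_{pol}([0,T]\times\mathbb{R})$, with $u$ solving the terminal-value problem
$$u_t(t,x) + b_t u_x(t,x) + \hat{\sigma}_t\sigma_t u_{xx}(t,x) + f(t,x) = 0, \q u(T,x) = h(x).$$
Setting $Y_t := u(t,\eta_t)$ and $Z_t := \sigma_t u_x(t,\eta_t)$, an application of Proposition \ref{10} to $u(t,\eta_t)$, combined with this PDE, produces exactly the BSDE in the statement; polynomial growth of $u$ and $u_x$ then places $(Y,Z)\in\widetilde{\mathcal{V}}_{[0,T]}\times\widetilde{\mathcal{V}}_{[0,T]}^H$.

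For the estimate, I would apply Proposition \ref{10} to $d(e^{\beta s}Y_s^2)$ on $[t,T]$ and take expectations (the divergence integral has mean zero by Proposition \ref{2}), arriving at
\begin{equation*}
\mathbb{E}\big[e^{\beta t}|Y_t|^2\big] + \beta\,\mathbb{E}\!\int_t^T\!e^{\beta s}|Y_s|^2\,ds + 2\,\mathbb{E}\!\int_t^T\!e^{\beta s}\mathbb{D}_s^H Y_s\,Z_s\,ds = \mathbb{E}\big[e^{\beta T}|h(\eta_T)|^2\big] + 2\,\mathbb{E}\!\int_t^T\!e^{\beta s}Y_s\,f(s,\eta_s)\,ds.
\end{equation*}
The Malliavin term is controlled through the identity $\mathbb{D}_s^H Y_s = (\hat{\sigma}_s/\sigma_s)\,Z_s$, a consequence of $Y_s = u(s,\eta_s)$ and the chain rule for $D^H$, together with the lower bound $\hat{\sigma}_s/\sigma_s \geq s^{2H-1}/M$ from Remark~6 of \cite{Maticiuc}; this extracts the desired weighted norm $(2/M)\,\mathbb{E}\!\int_t^T e^{\beta s}s^{2H-1}|Z_s|^2\,ds$ on the left. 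The cross term is handled by Young's inequality $2Y_s f\leq (\beta/2)|Y_s|^2 + (2/\beta)|f|^2$, which absorbs half of the $\beta$-weighted $Y$-norm and leaves the right-hand side of (\ref{34}).

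Uniqueness is then immediate: applied to the difference of two solutions (whose terminal data and generator vanish), the estimate forces $Y=Y'$ and $Z=Z'$ in their respective norms. The main technical obstacle is the rigorous justification of the Malliavin derivative identity $\mathbb{D}_s^H Y_s = (\hat{\sigma}_s/\sigma_s)Z_s$, which requires $Y_s\in\mathbb{D}^{1,2}$ with an explicitly computable Malliavin derivative; this is precisely what is secured by passing through the smooth value function $u$, and it motivates the $C^1_{pol}$ and $C^{0,1}_{pol}$ hypotheses imposed on $h$ and $f$.
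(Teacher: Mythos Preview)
The paper does not actually prove this lemma: it states the result and immediately defers to Lemma~3.1 of Wen and Shi \cite{Wen} for the details. Your proposal is essentially the argument one finds there and in the Hu--Peng/Maticiuc--Nie framework the paper already relies on: construct $u$ via the Gaussian Feynman--Kac formula, verify the associated parabolic PDE $u_t+b_tu_x+\hat\sigma_t\sigma_t u_{xx}+f=0$, set $(Y_t,Z_t)=(u(t,\eta_t),\sigma_t u_x(t,\eta_t))$, and obtain (\ref{34}) from the It\^o product formula together with $\mathbb{D}_s^H Y_s=(\hat\sigma_s/\sigma_s)Z_s$ and $\hat\sigma_s/\sigma_s\ge s^{2H-1}/M$. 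This is exactly the machinery the paper itself invokes in the proof of Theorem~\ref{30}, so your outline is in line with the intended argument.

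One point deserves a sentence of care. For uniqueness you apply the estimate to the difference of two solutions, but the key identity $\mathbb{D}_s^H Y_s=(\hat\sigma_s/\sigma_s)Z_s$ was justified in your sketch only for the \emph{constructed} solution $Y_s=u(s,\eta_s)$. To close the loop you need it for an arbitrary element of $\widetilde{\mathcal V}_{[0,T]}$; this follows by density, since by definition every such $Y$ is a limit of processes $\phi(\cdot,\eta_\cdot)$ with $\phi\in C^{1,3}_{pol}$, for which the chain rule gives the identity directly. You allude to this (``passing through the smooth value function''), but make sure the density step is stated, otherwise the uniqueness argument is circular.
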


\begin{theorem}\label{6}
Under (H1) and (H3), for a small time delay $\delta$, (\ref{1}) has a unique solution.
\end{theorem}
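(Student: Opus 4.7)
The plan is to follow the Picard/contraction mapping strategy of the first method, but to exploit the stronger assumption (H3) so that the energy estimate of Lemma \ref{35} directly produces a contraction in the weighted norm, bypassing the Bihari-type argument used before. First, for a given pair $(y,z) \in \widetilde{\mathcal{V}}_{[-\delta,T]} \times \widetilde{\mathcal{V}}^{H}_{[-\delta,T]}$, I would define $(Y,Z) = I[(y,z)]$ as the unique solution on $[0,T]$ of the non-delayed fractional BSDE
\[
-dY_t = f(t,\eta_t,y_{t-\delta},z_{t-\delta})\,dt - Z_t\,dB_t^H, \qquad Y_T = \xi,
\]
extended by $(\varphi,\psi)$ on $[-\delta,0)$; its existence follows exactly as in the first method from Maticiuc and Nie \cite{Maticiuc}.

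Next, for two inputs $(y,z)$ and $(y',z')$, let $(\hat y,\hat z)$ and $(\hat Y,\hat Z)$ denote the differences; by linearity $(\hat Y,\hat Z)$ solves a fractional BSDE on $[0,T]$ with zero terminal value and vanishes on $[-\delta,0)$. Applying the a priori estimate \rf{34} of Lemma \ref{35} (with $h\equiv 0$) to $(\hat Y,\hat Z)$ gives
\[
\mathbb{E}\Big(\tfrac{\beta}{2}\int_0^T e^{\beta s}|\hat Y_s|^2\,ds + \tfrac{2}{M}\int_0^T e^{\beta s} s^{2H-1}|\hat Z_s|^2\,ds\Big) \leq \tfrac{2}{\beta}\,\mathbb{E}\int_0^T e^{\beta s}|\Delta f(s)|^2\,ds,
\]
where $\Delta f(s) := f(s,\eta_s,y_{s-\delta},z_{s-\delta}) - f(s,\eta_s,y'_{s-\delta},z'_{s-\delta})$. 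Bounding $|\Delta f|^2$ by (H3) and then performing the change of variable $u=s-\delta$,
\[
\mathbb{E}\int_0^T e^{\beta s}|\Delta f(s)|^2\,ds \leq L\,e^{\beta\delta}\,\mathbb{E}\int_{-\delta}^{T-\delta} e^{\beta u}\big(|\hat y_u|^2 + |u|^{2H-1}|\hat z_u|^2\big)\,du,
\]
which, after trivial extension of the domain, is at most the full weighted norm of $(\hat y,\hat z)$ on $[-\delta,T]$. Since $\hat Y\equiv 0$ and $\hat Z\equiv 0$ on $[-\delta,0)$, the left-hand side equals $\min(\beta/2,2/M)$ times the same weighted norm of $(\hat Y,\hat Z)$ on $[-\delta,T]$.

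Putting these together, the operator $I$ satisfies
\[
\|(\hat Y,\hat Z)\|^2 \leq \frac{2L\,e^{\beta\delta}}{\beta\,\min(\beta/2,2/M)}\,\|(\hat y,\hat z)\|^2.
\]
Choosing $\beta$ large enough so that $\min(\beta/2,2/M)=2/M$ and $LM/\beta<1/2$, and then $\delta$ small enough so that $e^{\beta\delta}<2$, the ratio is strictly less than $1$; the Banach fixed point theorem then yields a unique solution of (\ref{1}). Crucially, $\beta$ depends only on $L$ and $M$, not on $T$, so the smallness requirement is entirely on $\delta$ and $T$ may be arbitrary.

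The bulk of the work, and what I expect to be the main technical point, is verifying that the singular weight $|t-\delta|^{2H-1}$ appearing in (H3) transforms under $u=s-\delta$ precisely into the weight $|u|^{2H-1}$ defining the $Z$-norm on $[-\delta,T]$; this matching is the whole reason (H3) is formulated with that particular weight. With only (H2) in force one is forced through an extra H\"older step producing powers of $T^{2-2H}$ as in (\ref{4}), which reintroduces dependence on $T$ and yields only the small-$T$ result of the first method. The strengthened condition (H3) is exactly what removes this H\"older step and transfers the smallness requirement from $T$ to $\delta$.
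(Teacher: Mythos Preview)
Your proposal is correct and follows essentially the same route as the paper: both set up the Picard map via Lemma~\ref{35}, apply the a priori estimate \rf{34} to the difference $(\hat Y,\hat Z)$, bound $|\Delta f|^2$ using (H3), shift $s\mapsto s-\delta$ so that the weight $|s-\delta|^{2H-1}$ becomes $|u|^{2H-1}$, and then tune $\beta$ and $\delta$ to force a contraction independent of $T$. The only cosmetic differences are that the paper cites Lemma~\ref{35} (rather than Maticiuc--Nie) for the solvability of the frozen equation and makes the explicit choice $\delta=1/\beta$ with $\beta=2LMe+4/M$, whereas you argue with a generic $\min(\beta/2,2/M)$; also note that your ``equals $\min(\beta/2,2/M)$ times the norm'' should read ``is at least'', though the inequality is all that is needed.
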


\begin{proof}
First, similar to the preceding method, for a given pair
$(y_{\cdot},z_{\cdot}) \in \widetilde{\mathcal{V}}_{[-\delta,T]} \times \widetilde{\mathcal{V}}_{[-\delta,T]}^{H}$,
we consider the following BSDE:
\begin{equation}\label{7}
  \begin{cases}
    -dY_{t}=g(t,\eta_{t}) dt - Z_{t} dB_{t}^{H}, \qq\qq 0\leq t\leq T; \\
     Y_{t} = \xi, \ \ Y_{t} = \varphi(t), \ \ Z_{t}=\psi(t),  \qq -\delta\leq t< 0,
  \end{cases}
\end{equation}
where
$$g(t,\eta_t)=f(t,\eta_{t},y_{t-\delta},z_{t-\delta}),\q~0\leq t\leq T.$$
Note that $(y_{\cdot},z_{\cdot})$ and $\d$ are given. From Lemma \ref{35}, wee see that
BSDE (\ref{7}) has a unique solution $(Y_{\cdot},Z_{\cdot})$.
Then, we can define a mapping
$$I:\widetilde{\mathcal{V}}_{[-\delta,T]} \times \widetilde{\mathcal{V}}_{[-\delta,T]}^{H}\longrightarrow
\widetilde{\mathcal{V}}_{[-\delta,T]} \times \widetilde{\mathcal{V}}_{[-\delta,T]}^{H}$$
such that $I[(y_{\cdot},z_{\cdot})]=(Y_{\cdot},Z_{\cdot})$.
In the following, we use the second method to show that $I$ is a contraction mapping on $\widetilde{\mathcal{V}}_{[-\delta,T]} \times \widetilde{\mathcal{V}}_{[-\delta,T]}^{H}$.

For arbitrary two pairs $(y_{\cdot},z_{\cdot})$ and
 $(y'_{\cdot},z'_{\cdot})$ in $\widetilde{\mathcal{V}}_{[-\delta,T]} \times \widetilde{\mathcal{V}}_{[-\delta,T]}^{H}$,
we let $$I[(y_{\cdot},z_{\cdot})]=(Y_{\cdot},Z_{\cdot}), \ \ \ I[(y'_{\cdot},z'_{\cdot})]=(Y'_{\cdot},Z'_{\cdot}).$$
And define
$$
\hat{y}_{\cdot}\deq y_{\cdot}-y'_{\cdot}, \q~  \hat{z}_{\cdot}\deq z_{\cdot}-z'_{\cdot},\q~
\hat{Y}_{\cdot}\deq Y_{\cdot}-Y'_{\cdot}, \q~  \hat{Z}_{\cdot}\deq Z_{\cdot}-Z'_{\cdot}.
$$
From the estimate (\ref{34}), we have
$$\ba{ll}
\ds\mathbb{E}\int_0^T e^{\beta s}\left(\frac{\beta}{2}|\hat{Y}_{s}|^{2}  + \frac{2}{M}s^{2H-1}|\hat{Z}_{s}|^{2} \right)ds\\
\ns\ds\leq \frac{2}{\beta} \mathbb{E}\int_0^T e^{\beta s}
      \big|f(s,\eta_{s},y_{s-\delta},z_{s-\delta}) - f(s,\eta_{s},y'_{s-\delta},z'_{s-\delta})\big|^{2} ds.
\ea$$
Then, from (H3) and Fubini's Theorem, we obtain
$$\ba{ll}
\ds\mathbb{E}\int_0^T e^{\beta s}\left(\frac{\beta}{2}|\hat{Y}_{s}|^{2}  + \frac{2}{M}s^{2H-1}|\hat{Z}_{s}|^{2} \right)ds\\
\ns\ds\leq \frac{2L}{\beta} \mathbb{E}\int_0^T e^{\beta s}\left(|\hat{y}_{s-\delta}|^{2} + |s-\delta|^{2H-1}|\hat{z}_{s-\delta}|^{2} \right) ds\\
\ns\ds\leq \frac{2L e^{\beta \delta}}{\beta}
\mathbb{E}\int_{-\delta}^{T} e^{\beta s} \left(|\hat{y}_{s}|^{2} + |s|^{2H-1}|\hat{z}_{s}|^{2} \right) ds.
\ea$$
Or
$$
     \mathbb{E}\int_0^T e^{\beta s} \bigg(\frac{M \beta}{4}|\hat{Y}_{s}|^{2} + s^{2H-1}|\hat{Z}_{s}|^{2} \bigg)ds
\leq \frac{LMe^{\beta \delta}}{\beta}
\mathbb{E}\int_{-\delta}^{T} e^{\beta s} \left(|\hat{y}_{s}|^{2} + |s|^{2H-1}|\hat{z}_{s}|^{2} \right) ds.
$$
Therefore, by choosing $\delta=\frac{1}{\beta}$ with $\beta=2LMe+\frac{4}{M}$, we have
$$
\mathbb{E} \int_{-\delta}^T e^{\beta s}\bigg(|\hat{Y}_{s}|^{2} + |s|^{2H-1}|\hat{Z}_{s}|^{2}\bigg) ds
\leq \frac{1}{2} \mathbb{E}\int_{-\delta}^T e^{\beta s}\left(|\hat{y}_{s}|^{2} + |s|^{2H-1}|\hat{z}_{s}|^{2}\right) ds.
$$
Hence $I$ is a contraction mapping on
$\widetilde{\mathcal{V}}_{[-\delta,T]} \times \widetilde{\mathcal{V}}^{H}_{[-\delta,T]}$,
which implies (\ref{1}) admits a unique solution.
\end{proof}

\begin{remark}
We make a comparison for the above two methods.
It is easy to see that (H2) is weaker than (H3).
So from the point of view of the condition, the first method is better than the second one.
On the other hand, thanks to the concise proof and the arbitrary horizon $T$, the second method is better.
\end{remark}

Now, we return to the general equation (\ref{0}). And the following assumption is needed.

\begin{itemize}
  \item [(H4)] The generator $f(t,x,y,z,y_{\delta},z_{\delta}):[0,T]\times \mathbb{R}^{5}\rightarrow \mathbb{R}$ is
               a $C_{pol}^{0,1}$-continuous function.
               Moreover, there exists $L\geq 0$ such that $f$ satisfies the following condition:
               for every $t\in [0,T], x,y,y',z,z',$ $y_{\delta},y'_{\delta},z_{\delta},z'_{\delta} \in \mathbb{R},$
\bel{15}\ba{ll}
\ds|f(t,x,y,z,y_{\delta},z_{\delta}) - f(t,x,y',z',y'_{\delta},z'_{\delta})|^{2} \\
\ns\ds\leq L\big(|y-y'|^{2} + t^{2H-1}|z-z'|^{2} + |y_{\delta}-y_{\delta}'|^{2} + |t-\delta|^{2H-1}|z_{\delta}-z'_{\delta}|^{2} \big),
\ea\ee
\end{itemize}

We have the following existence and uniqueness  result for the general BSDE (\ref{0}).

\begin{theorem}\label{16}
Under (H1) and (H4), for a small time delay $\delta$,
BSDE (\ref{0}) admits a unique solution
$(Y_{\cdot},Z_{\cdot}) \in \widetilde{\mathcal{V}}_{[-\delta,T]} \times \widetilde{\mathcal{V}}_{[-\delta,T]}^{H}$.
\end{theorem}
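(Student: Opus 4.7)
The plan is to combine the Picard iteration scheme of Theorems \ref{30} and \ref{6} with the energy estimate of Theorem \ref{6}, carrying the additional dependence of $f$ on the current values $(Y,Z)$ through the Itô identity by absorption. First, for a given input pair $(y_{\cdot},z_{\cdot}) \in \widetilde{\mathcal V}_{[-\delta,T]} \times \widetilde{\mathcal V}_{[-\delta,T]}^{H}$, consider the auxiliary fractional BSDE
\begin{equation*}
\begin{cases}
-dY_t = f\big(t,\eta_t,Y_t,Z_t,y_{t-\delta},z_{t-\delta}\big)\,dt - Z_t\,dB_t^H, & 0\le t\le T,\\
Y_T=\xi,\q Y_t=\varphi(t),\q Z_t=\psi(t), & -\delta\le t<0.
\end{cases}
\end{equation*}
Since $(y,z)$ are frozen, the generator $(Y,Z)\mapsto f(t,\eta_t,Y,Z,y_{t-\delta},z_{t-\delta})$ inherits from (H4) exactly the Lipschitz-in-$(Y,Z)$ structure handled by Proposition 17 of \cite{Maticiuc}, so there exists a unique solution $(Y_{\cdot},Z_{\cdot})$. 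This defines a map $I:(y,z)\mapsto(Y,Z)$ on $\widetilde{\mathcal V}_{[-\delta,T]} \times \widetilde{\mathcal V}_{[-\delta,T]}^{H}$, and it suffices to show that $I$ is a contraction.

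For two input pairs, write $(\hat Y,\hat Z)=I(y,z)-I(y',z')$ with boundary identically zero on $[-\delta,0]$, and set $\Delta_s := f(s,\eta_s,Y,Z,y_{s-\delta},z_{s-\delta}) - f(s,\eta_s,Y',Z',y'_{s-\delta},z'_{s-\delta})$. Applying the fractional Itô formula (Proposition \ref{10}) to $s\mapsto e^{\beta s}|\hat Y_s|^2$, using the identity $\mathbb{D}_s^H\hat Y_s=(\hat\sigma_s/\sigma_s)\hat Z_s$ established in \cite{Hu,Maticiuc} and the lower bound $\hat\sigma_s/\sigma_s\ge s^{2H-1}/M$, one obtains
\begin{equation*}
\mathbb{E}\bigg[e^{\beta t}|\hat Y_t|^2 + \beta\int_t^T e^{\beta s}|\hat Y_s|^2\,ds + \frac{2}{M}\int_t^T e^{\beta s}s^{2H-1}|\hat Z_s|^2\,ds\bigg] \le 2\,\mathbb{E}\int_t^T e^{\beta s}\hat Y_s\,\Delta_s\,ds.
\end{equation*}
Apply the Young inequality $2\hat Y_s\Delta_s\le \gamma^{-1}|\hat Y_s|^2+\gamma|\Delta_s|^2$ followed by (H4) to bound
\begin{equation*}
\gamma|\Delta_s|^2 \le \gamma L\bigl(|\hat Y_s|^2 + s^{2H-1}|\hat Z_s|^2 + |\hat y_{s-\delta}|^2 + |s-\delta|^{2H-1}|\hat z_{s-\delta}|^2\bigr).
\end{equation*}

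Choose $\gamma$ small (for example $\gamma = 1/(LM)$) so that $\gamma L \le 1/M < 2/M$, thereby absorbing the $s^{2H-1}|\hat Z_s|^2$ term into the LHS and leaving a strictly positive coefficient $1/M$ in front of $\int_t^T e^{\beta s}s^{2H-1}|\hat Z_s|^2\,ds$; then pick $\beta$ so large that $\beta-\gamma L-\gamma^{-1}$ is also bounded below by a positive constant, absorbing the $|\hat Y_s|^2$ terms. Performing the change of variables $s\mapsto s+\delta$ on the two remaining delayed integrals contributes a factor $e^{\beta\delta}$, and after taking $t=0$ and adding the trivial contribution from $[-\delta,0)$, we arrive at an estimate of the form
\begin{equation*}
c_Y\,\mathbb{E}\!\int_{-\delta}^{T}\!e^{\beta s}|\hat Y_s|^2\,ds + c_Z\,\mathbb{E}\!\int_{-\delta}^{T}\!e^{\beta s}|s|^{2H-1}|\hat Z_s|^2\,ds \le C\,e^{\beta\delta}\,\mathbb{E}\!\int_{-\delta}^{T}\!e^{\beta s}\bigl(|\hat y_s|^2 + |s|^{2H-1}|\hat z_s|^2\bigr) ds,
\end{equation*}
with constants $c_Y,c_Z,C>0$ depending on $L,M,\beta$ but not on $\delta$. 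Finally, as in the proof of Theorem \ref{6}, one first fixes $\beta$ large (and hence $c_Y,c_Z$ fixed) and then chooses $\delta$ so small that $C e^{\beta\delta}/\min(c_Y,c_Z)\le 1/2$; this makes $I$ a $\frac12$-contraction on the Banach space, and Banach's fixed point theorem yields the unique solution of (\ref{0}).

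The main obstacle is the parameter balancing in the third step. Compared to Theorem \ref{6}, the direct dependence of $f$ on $(Y,Z)$ injects the extra terms $\gamma L|\hat Y|^2$ and $\gamma L s^{2H-1}|\hat Z|^2$ on the right of the Itô identity, and these must be \emph{strictly} dominated by the LHS coefficients before any room is available to accommodate the delayed terms. The weight $t^{2H-1}$ built into (H4) is essential here: it is exactly what allows the $|\hat Z|^2$-contribution of $\Delta_s$ to be absorbed against the $s^{2H-1}|\hat Z_s|^2$ term generated by the fractional Itô formula. Once that absorption is secured, the standard tension between making $\beta$ large (to control $|\hat Y|^2$) and keeping $e^{\beta\delta}$ small (to obtain contraction) is resolved exactly as in Theorem \ref{6} by shrinking $\delta$.
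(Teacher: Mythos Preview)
Your argument is essentially correct but takes a more laborious route than the paper intends, and it contains one numerical slip that undercuts the contraction claim as written.

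The paper does not give a separate proof; it states that the argument is ``essentially the same as the above second method'' (Theorem~\ref{6}). That means: freeze the \emph{entire} input, setting
\[
g(t,\eta_t)=f\bigl(t,\eta_t,\,y_t,\,z_t,\,y_{t-\delta},\,z_{t-\delta}\bigr),
\]
so that the auxiliary BSDE has no dependence on $(Y,Z)$ at all. One can then invoke Lemma~\ref{35} directly, and its estimate~\eqref{34} already carries the decisive factor $2/\beta$ on the right-hand side. Applying (H4) and shifting the delayed integrals yields a contraction constant of order $ML(1+e^{\beta\delta})/\beta$, which is pushed below $\tfrac12$ by first choosing $\beta$ large and then $\delta\sim 1/\beta$, exactly as in Theorem~\ref{6}. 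No fresh It\^o computation or absorption of current $(Y,Z)$-terms is needed. Your variant---freezing only the delayed inputs and keeping the current $(Y,Z)$ active in the auxiliary equation---also works, but it forces you to appeal to a stronger well-posedness result for the auxiliary BSDE and to redo the energy identity by hand.

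The slip: your example $\gamma=1/(LM)$ does not produce a contraction. With that choice $\gamma L=1/M$, so the residual $Z$-coefficient on the left is $c_Z=2/M-1/M=1/M$, while the delayed-term constant on the right is $C=\gamma L=1/M$ as well; hence $C e^{\beta\delta}/c_Z=e^{\beta\delta}\ge 1$ for every $\delta>0$, and no choice of $\delta$ rescues the $\tfrac12$-bound. You need $\gamma L<\tfrac12\,c_Z$, i.e.\ $\gamma<2/(3LM)$, to leave room for the factor $e^{\beta\delta}$. Taking for instance $\gamma=1/(2LM)$ gives $c_Z=3/(2M)$ and $C=1/(2M)$, and then small $\delta$ (after fixing $\beta$ large enough to handle $c_Y$) indeed yields the desired contraction.
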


\begin{remark}
Since the proof of Theorem \ref{16} is essentially the same as the above second method,
we only present the result without a detailed proof.
Furthermore, similar to the first method in the preceding subsection,
under (H1) and the related Lipschitz condition (H2), if the time horizon $T$ is sufficiently small,
 BSDE (\ref{0}) also has a unique solution.
\end{remark}

\section{Comparison theorem}

For its wide applications to BSDEs, a comparison theorem of the fractional BSDEs with delayed generator is investigated in this section. In detail, we study a comparison theorem for the solutions of the following type of fractional BSDEs with delayed generator:
\begin{equation}\label{41}
  \begin{cases}
    -dY(t)=f(t,\eta(t),Y(t),Z(t),Y(t-\delta)) dt - Z(t) dB_{t}^{H}, \qq 0\leq t\leq T; \\
     Y(T) = h(\eta_{T}), \qq Y(t) = \varphi(t),  \qq  -\delta\leq t< 0.
  \end{cases}
\end{equation}
From Theorem \ref{16}, under (H1) and (H4), for a small time delay $\delta$, the above equation has a unique solution in
$\widetilde{\mathcal{V}}_{[-\delta,T]} \times \widetilde{\mathcal{V}}_{[0,T]}^{H}$.

\begin{theorem}\label{40}
For $i=1,2$, suppose $h_{i}$ and $\varphi_{i}$ satisfy (H1),
$f_{i}(t,x,y,z,y_{\delta})$ and $\partial_{y}f_{i}(t,x,y,z,y_{\delta})$ satisfy (H4)
for every $(t,x,y,z,y_{\delta})\in[0,T]\times \mathbb{R}^{4}$.
Moreover, assume $f_{1}$ is increasing in $y_{\delta}$, i.e.,
$f_{1}(t,x,y,z,y_{\delta})\leq f_{1}(t,x,y,z,y_{\delta}')$ when $y_{\delta}\leq y_{\delta}'$.
Then, if $\psi_{1}(t)\leq\psi_{2}(t)$, $t\in[-\delta,0]$, and
\begin{equation*}
 h_{1}(x)\leq h_{2}(x),  \ \ f_{1}(t,x,y,z,y_{\delta}) \leq f_{2}(t,x,y,z,y_{\delta}),
 \ \ (t,x,y,z,y_{\delta}) \in[0,T] \times \mathbb{R}^{4},
\end{equation*}
one has
\begin{equation*}
Y_{1}(t)\leq Y_{2}(t), \ a.e., \ a.s.
\end{equation*}
\end{theorem}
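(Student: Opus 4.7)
The plan is to reduce the delayed comparison to the standard (non-delayed) fractional BSDE comparison theorem (Hu and Peng \cite{Hu}, Maticiuc and Nie \cite{Maticiuc}) by inserting it inside the fixed-point scheme that produces the solutions in Theorem \ref{16}. Recall that $(Y_i,Z_i)$, for $i=1,2$, is the unique fixed point of the map $I_i\colon(y,z)\mapsto(Y,Z)$, where $(Y,Z)$ solves the non-delayed fractional BSDE with generator $(t,y',z')\mapsto f_i(t,\eta_t,y',z',y(t-\delta))$, terminal $h_i(\eta_T)$, and past trace $\varphi_i$ on $[-\delta,0)$. Under the hypotheses of Theorem \ref{16}, $I_i$ is a contraction for a small time delay, so every orbit converges to $(Y_i,Z_i)$ in $\widetilde{\mathcal{V}}_{[-\delta,T]}\times\widetilde{\mathcal{V}}_{[-\delta,T]}^{H}$.

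Starting from $(Y^{0},Z^{0}):=(Y_{2},Z_{2})$, I would build inductively $(Y^{n+1},Z^{n+1}):=I_{1}(Y^{n},Z^{n})$, so that $(Y^{n},Z^{n})\to(Y_{1},Z_{1})$, and claim the monotonicity $Y^{n+1}\leq Y^{n}$ a.e., a.s., for every $n\geq 0$. For the base case, $Y^{1}$ satisfies on $[0,T]$ a non-delayed fractional BSDE with generator $f_{1}(t,\eta_{t},y,z,Y_{2}(t-\delta))$ and terminal $h_{1}(\eta_{T})$, while $Y_{2}$ satisfies the analogous equation with generator $f_{2}(t,\eta_{t},y,z,Y_{2}(t-\delta))$ and terminal $h_{2}(\eta_{T})$; the pointwise hypotheses $f_{1}\leq f_{2}$ and $h_{1}\leq h_{2}$, together with the standard comparison theorem, give $Y^{1}\leq Y_{2}$ on $[0,T]$, while on $[-\delta,0)$ one has $Y^{1}=\varphi_{1}\leq\varphi_{2}=Y^{0}$. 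For the inductive step, $Y^{n+1}$ and $Y^{n}$ share the terminal $h_{1}(\eta_{T})$ and past $\varphi_{1}$, and their generators differ only in the frozen delayed argument, which equals $Y^{n}(t-\delta)$ and $Y^{n-1}(t-\delta)$, respectively; the induction hypothesis $Y^{n}\leq Y^{n-1}$, combined with the assumed monotonicity of $f_{1}$ in $y_{\delta}$, yields the pointwise ordering of the generators, and standard comparison gives $Y^{n+1}\leq Y^{n}$.

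The main obstacle is making sure that the standard non-delayed comparison theorem is legitimately available at each iteration step. The assumption that $f_{i}$ and $\partial_{y}f_{i}$ both satisfy (H4) is tailored for this: it provides the smoothness of the generator in $y$ that is needed in the cited references to invoke the PDE representation $Y^{n}_{i}(t)=u^{n}_{i}(t,\eta_{t})$ and the comparison principle for the associated quasi-linear parabolic equation. Once the monotone chain $\cdots\leq Y^{n+1}\leq Y^{n}\leq\cdots\leq Y^{0}=Y_{2}$ is in place, the $\widetilde{\mathcal{V}}$-convergence $Y^{n}\to Y_{1}$ (extracted along an a.s.\ subsequence) combined with monotonicity forces the pointwise limit to coincide with $Y_{1}$, giving $Y_{1}\leq Y_{2}$ a.e., a.s.
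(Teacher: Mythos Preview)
Your proposal is correct and follows essentially the same route as the paper: start the Picard iteration for $I_{1}$ at $(Y_{2},Z_{2})$, apply the non-delayed fractional comparison theorem (the paper cites Theorem~12.3 of Hu, Ocone and Song \cite{Hu1}) at each step together with the monotonicity of $f_{1}$ in $y_{\delta}$ to obtain a decreasing chain $Y_{2}=Y^{0}\geq Y^{1}\geq\cdots$, and identify the limit with $Y_{1}$ via uniqueness. The only cosmetic difference is that the paper re-derives the Cauchy property of $(Y^{n},Z^{n})$ by repeating the estimate \eqref{34}, whereas you invoke the contraction of $I_{1}$ established in Theorem~\ref{16} directly; the two arguments are the same computation.
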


\begin{proof}
Let $\widetilde{Y}_{0}(\cdot)=Y_{2}(\cdot)$ and consider the following BSDE:
\begin{equation*}
  \begin{cases}
      -d\widetilde{Y}_{1}(t)=
      f_{1}(t,\eta_{t},\widetilde{Y}_{1}(t),\widetilde{Z}_{1}(t),\widetilde{Y}_{0}(t-\delta)) dt
       - \widetilde{Z}_{1}(t) dB_{t}^{H}, \q  0\leq t\leq T; \\
      \widetilde{Y}_{1}(T) = h_{1}(\eta_{T}), \q
      \widetilde{Y}_{1}(t) = \varphi_{1}(t),  \q  -\delta\leq t< 0.
  \end{cases}
\end{equation*}
From Theorem \ref{16}, the above equation has a unique solution
$(\widetilde{Y}_{1}(\cdot),\widetilde{Z}_{1}(\cdot))
\in \widetilde{\mathcal{V}}_{[-\delta,T]} \times \widetilde{\mathcal{V}}_{[0,T]}^{H}$.
Since
\begin{equation*}
  \begin{cases}
   f_{1}(t,x,y,z,\widetilde{Y}_{0}(t-\delta))
   \leq f_{2}(t,x,y,z,\widetilde{Y}_{0}(t-\delta)), \ \ (t,x,y,z)\in [0,T]\times \mathbb{R}^{3};\\
   h_{1}(x)\leq h_{2}(x), \ \ x\in \mathbb{R},
  \end{cases}
\end{equation*}
from Theorem 12.3 of Hu et al. \cite{Hu1}, we have
\begin{equation*}
  \widetilde{Y}_{1}(t)\leq \widetilde{Y}_{0}(t)=Y_{2}(t), \  \ a.e., \ a.s.
\end{equation*}
Next, we consider the following BSDE:
\begin{equation*}
  \begin{cases}
      -d\widetilde{Y}_{2}(t)=
      f_{1}(t,\eta_{t},\widetilde{Y}_{2}(t),\widetilde{Z}_{2}(t),\widetilde{Y}_{1}(t-\delta)) dt
       - \widetilde{Z}_{2}(t) dB_{t}^{H}, \q  0\leq t\leq T; \\
      \widetilde{Y}_{2}(T) = h_{1}(\eta_{T}), \q
      \widetilde{Y}_{2}(t) = \varphi_{1}(t),  \q  -\delta\leq t< 0.
  \end{cases}
\end{equation*}
and denote by
$(\widetilde{Y}_{2}(\cdot),\widetilde{Z}_{2}(\cdot)) \in
\widetilde{\mathcal{V}}_{[-\delta,T]} \times \widetilde{\mathcal{V}}_{[0,T]}^{H}$
the unique solution of the above equation.
Then, since $f_{1}(t,x,y,z,\cdot)$ is increasing,
we have for all $(t,x,y,z)\in [0,T]\times \mathbb{R}^{3}$,
\begin{equation*}
   f_{1}(t,x,y,z,\widetilde{Y}_{1}(t-\delta))
   \leq f_{1}(t,x,y,z,\widetilde{Y}_{0}(t-\delta)).
\end{equation*}
Hence, similar as the above discission,
\begin{equation*}
  \widetilde{Y}_{2}(t)\leq \widetilde{Y}_{1}(t), \ \ a.e., \ a.s.
\end{equation*}
Then, by induction, one can construct a sequence
$\{(\widetilde{Y}_{n}(\cdot),\widetilde{Z}_{n}(\cdot))\}_{n\geq 1} \subseteq \widetilde{\mathcal{V}}_{[-\delta,T]} \times \widetilde{\mathcal{V}}_{[0,T]}^{H}$
such that
\begin{equation*}
  \begin{cases}
      -d\widetilde{Y}_{n}(t)=
      f_{1}(t,\eta_{t},\widetilde{Y}_{n}(t),\widetilde{Z}_{n}(t),\widetilde{Y}_{n-1}(t-\delta)) dt
       - \widetilde{Z}_{n}(t) dB_{t}^{H}, \q  0\leq t\leq T; \\
      \widetilde{Y}_{n}(T) = h_{1}(\eta_{T}), \q
      \widetilde{Y}_{n}(t) = \varphi_{1}(t),  \q  -\delta\leq t< 0.
  \end{cases}
\end{equation*}
Similarly, we obtain
\begin{equation*}
  Y_{2}(t)= \widetilde{Y}_{0}(t)\geq \widetilde{Y}_{1}(t)\geq \widetilde{Y}_{2}(t)\geq
   \cdots \geq \widetilde{Y}_{n}(t)\geq \cdots, \ a.e., \ a.s.
\end{equation*}
In the following, we shall show $\{(\widetilde{Y}_{n}(\cdot),\widetilde{Z}_{n}(\cdot))\}_{n\geq 1}$ is a Cauchy sequence.

Denote
$\hat{Y}_{n}(t)=\widetilde{Y}_{n}(t)-\widetilde{Y}_{n-1}(t)$ and
$\hat{Z}_{n}(t)=\widetilde{Z}_{n}(t)-\widetilde{Z}_{n-1}(t), \ n\geq 4$.
From (\ref{34}) and the assumption (H4), we have
\begin{equation*}
\begin{split}
    & \mathbb{E}\left(\frac{\beta}{2}\int_0^T e^{\beta s}|\hat{Y}_{n}(s)|^{2} ds
    +\frac{2}{M}\int_0^T s^{2H-1}e^{\beta s}|\hat{Z}_{n}(s)|^{2} ds\right)\\
\leq& \frac{2}{\beta}\mathbb{E}\bigg(\int_0^T e^{\beta s}
      \big| f_{1}(s,\eta_{s},\widetilde{Y}_{n}(s),\widetilde{Z}_{n}(s),\widetilde{Y}_{n-1}(s-\delta))\\
    & \ \ \ \ \ \ \ \ \ \ \ \ \ \  -f_{1}(s,\eta_{s},\widetilde{Y}_{n-1}(s),\widetilde{Z}_{n-1}(s),\widetilde{Y}_{n-2}(s-\delta)) \big|^{2} ds\bigg)\\
\leq& \frac{2L}{\beta}\mathbb{E} \int_0^T e^{\beta s}\big(|\hat{Y}_{n}(s)|^{2}+s^{2H-1}|\hat{Z}_{n}(s)|^{2}\big) ds
     +\frac{2L e^{\beta \delta}}{\beta}\mathbb{E} \int_{-\delta}^T e^{\beta s}|\hat{Y}_{n-1}(s)|^{2} ds.
\end{split}
\end{equation*}
Then, by choosing $\delta=\frac{1}{\beta}$ with $\beta=8LMe+\frac{4}{M}$, one has
$$\ba{ll}
\ds \mathbb{E}\int_0^T e^{\beta s}\big(|\hat{Y}_{n}(s)|^{2} + s^{2H-1}|\hat{Z}_{n}(s)|^{2}\big) ds \\
\ns\ds\leq \frac{1}{4}\mathbb{E} \int_0^T e^{\beta s}\big(|\hat{Y}_{n}(s)|^{2}+s^{2H-1}|\hat{Z}_{n}(s)|^{2}\big) ds
     +\frac{1}{4}\mathbb{E} \int_{-\delta}^T e^{\beta s}|\hat{Y}_{n-1}(s)|^{2} ds.
\ea$$
Hence
$$\ba{ll}
\ds\mathbb{E}\int_0^T e^{\beta s}\big(|\hat{Y}_{n}(s)|^{2} + s^{2H-1}|\hat{Z}_{n}(s)|^{2}\big) ds \\
\ns\ds\leq  \frac{1}{3}\mathbb{E}\int_{-\delta}^T e^{\beta s}|\hat{Y}_{n-1}(s)|^{2} ds\\
\ns\ds\leq  \frac{1}{3}\bigg(\mathbb{E}\int_{-\delta}^T e^{\beta s}|\hat{Y}_{n-1}(s)|^{2} ds
          +       \mathbb{E}\int_{0}^T e^{\beta s}s^{2H-1}|\hat{Z}_{n-1}(s)|^{2} ds\bigg).
\ea$$
Therefore
$$\ba{ll}
\ds  \mathbb{E}\int_{-\delta}^T e^{\beta s}|\hat{Y}_{n}(s)|^{2} ds
    + \mathbb{E}\int_{0}^T e^{\beta s}s^{2H-1}|\hat{Z}_{n}(s)|^{2} ds\\
\ns\ds\leq (\frac{1}{3})^{n-4}\bigg(\mathbb{E}\int_{-\delta}^T e^{\beta s}|\hat{Y}_{4}(s)|^{2} ds
          +       \mathbb{E}\int_{0}^T e^{\beta s}s^{2H-1}|\hat{Z}_{4}(s)|^{2} ds\bigg).
\ea$$
It follows that $(\hat{Y}_{n}(\cdot))_{n\geq 4}$ is a Cauchy sequence in Banach space $\widetilde{\mathcal{V}}_{[-\delta,T]}$,
and $(\hat{Z}_{n}(\cdot))_{n\geq 4}$ is a Cauchy sequence in Banach space $\widetilde{\mathcal{V}}_{[0,T]}^{H}$.
Denote their limits by $\widetilde{Y}_{\cdot}$ and $\widetilde{Z}_{\cdot}$, respectively.
Now from the existence and uniqueness theorem (Theorem \ref{16}), we obtain
\begin{equation*}
  \widetilde{Y}(t)= Y_{1}(t), \ \ a.e., \ a.s.
\end{equation*}
Then, we get
\begin{equation*}
Y_{1}(t)\leq Y_{2}(t), \ \ a.e., \ a.s.
\end{equation*}
Therefore, the desired result is obtained.
\end{proof}

\begin{remark}
It should be pointed out that the results are based on the Hurst parameter $H$ greater then $1/2$. And due to the technical difficulty, the theory of the fractional BSDEs with $H < 1/2$ is still an open problem now.
We hope to give some related results when the Hurst parameter $H<\frac{1}{2}$ in the near future.
\end{remark}

\begin{example}
 Suppose we are facing with the following two BSDEs,
\begin{equation*}
  \begin{cases}
    -dY_{1}(t)=\big[Y_{1}(t) + t^{2H-1}Z_{1}(t) + Y_{1}(t-\delta) - 1 \big] dt - Z_{1}(t) dB_{t}^{H}, \q 0\leq t\leq T; \\
     Y_{1}(T) = h_{1}(\eta_{T}), \q Y_{1}(t) = \varphi_{1}(t),  \q  -\delta\leq t< 0,
  \end{cases}
\end{equation*}
and
\begin{equation*}
  \begin{cases}
    -dY_{2}(t)=\big[Y_{2}(t) + t^{2H-1}Z_{2}(t) + Y_{2}(t-\delta) + 1 \big] dt - Z_{2}(t) dB_{t}^{H}, \q 0\leq t\leq T; \\
     Y_{2}(T) = h_{2}(\eta_{T}), \q Y_{2}(t) = \varphi_{2}(t),  \q  -\delta\leq t< 0,
  \end{cases}
\end{equation*}
where for $i=1,2$, $h_{i}$ and $\varphi_{i}$ satisfy (H1) with $h_{1}(x)\leq h_{2}(x)$ and
 $\varphi_{1}(t)\leq \varphi_{2}(t)$ for every $t\in[-\delta,0], \ x\in \mathbb{R}$.
 Then, according to Theorem \ref{40}, we get $ Y_{1}(t)\leq Y_{2}(t), \ \ a.e., \ a.s. $
\end{example}

\section*{Conclusion}

In this paper, we studied the fractional BSDEs with delayed generator. In particular, we consider the case of the Hurst parameter $H>\frac{1}{2}$.
We proposed two different methods to prove the existence and uniqueness of such BSDEs.
We also prove a comparison theorem for this type of equations.
It should be pointed out that the results obtained in this article extend part of
the main results of Delong and Imkeller \cite{Delong,Delong2} to fractional calculus.
In the coming future researches,
we would like to focus on the application of this equation in finance.
The theory of the case when the Hurst parameter $H < \frac{1}{2}$ is anther goal.

\end{document}